\documentclass[12pt]{amsart}

\usepackage{amsthm}
\usepackage[dvips]{color}
\usepackage{url}
\usepackage{subfigure}
\usepackage{graphicx}
\usepackage[dvips]{color}
\usepackage{placeins}

\hoffset=-0.5in \textwidth=4in

\textwidth=14cm

%\swapnumbers
\newtheorem{theorem}{Theorem}[section]
\newtheorem{lemma}[theorem]{Lemma}

\theoremstyle{definition}

\theoremstyle{remark}
\newtheorem{rem}[theorem]{Remark}

\numberwithin{equation}{section}

\begin{document}
\sloppy
\title[Boundary Conditions for Fractional Diffusion]{Boundary Conditions for Fractional Diffusion}

\author[Baeumer]{Boris Baeumer}
\address{Boris Baeumer, University of Otago, New Zealand}
\email{bbaeumer@maths.otago.ac.nz}
\thanks{Baeumer was partially supported by the Marsden Fund administered by the Royal Society of New Zealand.}

%\fntext[myfootnote]{Since 1880.}
\author[Kov\'acs]{Mih\'aly Kov\'acs}
\address{Mih\'aly Kov\'acs, Chalmers University of Technology, Sweden}
\email{mihaly@chalmers.se}
\thanks{Kov\'acs was partially supported by the Marsden Fund administered by the Royal Society of New Zealand.}

\author[Meerschaert]{Mark M. Meerschaert}
\address{Mark M. Meerschaert, Department of Statistics and Probability, Michigan State University}
\email{mcubed@stt.msu.edu}
\thanks{Meerschaert was partially supported by ARO MURI grant W911NF-15-1-0562 and NSF grants DMS-1462156 and EAR-1344280.}

\author[Sankaranarayanan]{Harish Sankaranarayanan}
\address{Harish Sankaranarayanan, Michigan State University, USA}
\email{harish@msu.edu}
\thanks{Sankaranarayanan was supported by ARO MURI grant W911NF-15-1-0562.}

\keywords{Fractional calculus; boundary value problem; numerical solution; well-posed}
\date{\today}
%\subjclass[2010]{60K05; 33E12; 26A33.}
%

\begin{abstract}
This paper derives physically meaningful boundary conditions for fractional diffusion equations, using a mass balance approach.  Numerical solutions are presented, and theoretical properties are reviewed, including well-posedness and steady state solutions.  Absorbing and reflecting boundary conditions are considered, and illustrated through several examples.  Reflecting boundary conditions involve fractional derivatives.  The Caputo fractional derivative is shown to be unsuitable for modeling fractional diffusion, since the resulting boundary value problem is not positivity preserving.
\end{abstract}

\maketitle

\section{Introduction}
The space-fractional diffusion equation replaces the second derivative or Laplacian in the traditional diffusion equation with a fractional derivative.  Fractional derivatives were invented soon after their integer-order counterparts, and by now have become an established field of study with a wide variety of applications in science and technology \cite{Herrmann,KRS,MainardiWaves,mainardi1997fractals,Metzler2000,Metzler2004a,Podlubny,Samko}.  Many effective numerical methods have been developed for fractional differential equations, along with proofs of stability and consistency \cite{Deng, DFF, FixRoop,  LAT, Lynch, frade, 2sided, TMSADIIE, OM, Podlubny, CNfde,Yuste, ZLPM}.  In many cases, the underlying theory is still being developed, and indeed it is not known whether the problems are well-posed, with unique solutions.  Part of the difficulty has been that fractional derivatives are nonlocal operators, and hence the concept of a boundary condition takes on new meaning \cite{SpaceTimeFrac,FCAAnonlocal}.

This paper considers space-fractional diffusion equations on the unit interval $0\leq x\leq 1$ with absorbing or reflecting boundary conditions.  Both Riemann-Liouville and Caputo flux forms are considered, and the profound difference in the solutions is illustrated. To specify a fractional diffusion equation on a bounded domain, appropriate boundary conditions must be enforced.  We discuss absorbing (Dirichlet) and reflecting (Neumann) boundary conditions, which can take a very different form for a fractional evolution equation.  For example, we will see that the appropriate Neumann boundary condition sets a fractional derivative equal to zero at the boundary, not the first derivative as in the traditional diffusion equation.  We also show that the Caputo form does not preserve positivity, and hence cannot provide a suitable model for anomalous diffusion.

By varying the type of space-fractional derivative and the boundary conditions, we obtain a number of possible fractional diffusion equations on the unit interval.  For each of these, we develop and apply a suitable numerical solution method.  We also review the underlying theory from the point of view of abstract evolution equations, semigroups and generators.  Well-posedness is verified, including uniqueness of solutions, and steady-state solutions are identified.

\section{Fractional boundary value problems}\label{Sec2}

Consider the fractional diffusion equation
\begin{equation}
\label{RL-CauchyEq}
\frac{\partial}{\partial t}u(x,t)= C\, {\mathbb D}^\alpha u(x,t)
\end{equation}
on the entire real line, where the Riemann-Liouville fractional derivative
\begin{equation}
\label{RLdef}
{\mathbb D}^\alpha  u(x,t) = \frac{1}{\Gamma(n-\alpha)} \frac{\partial^n}{\partial x^n} \int_{-\infty}^x u(y,t)(x-y)^{n-\alpha-1} dy
\end{equation}
for $\alpha>0$ and $n-1<\alpha\leq n$.  Note that \eqref{RLdef} is a nonlocal operator that depends on the values of $u(y,t)$ at every point $y<x$.  The exact analytical solution to \eqref{RL-CauchyEq} can be written in terms of a stable probability density function.  Although this analytical solution cannot be computed in closed form, there are readily available codes that compute the stable density, and these can be used to plot the solutions to \eqref{RL-CauchyEq}.  See for example \cite[Chapter 5]{FCbook}.

However, if we restrict the fractional diffusion to a finite interval, then there are no known analytical solutions, and numerical methods must be used.  First consider the fractional diffusion equation
\begin{equation}
\label{DcDDeq}
\frac{\partial}{\partial t}u(x,t)= C\, {\mathbb D}^\alpha_{[0,x]} u(x,t)
\end{equation}
on the state space $0< x< 1$ with initial condition $u(x,0)=u_0(x)$.  On this finite domain, we define the Riemann-Liouville fractional derivative
\begin{equation}
\label{RLdef0}
{\mathbb D}^\alpha_{[0,x]}  u(x,t) = \frac{1}{\Gamma(n-\alpha)} \frac{\partial^n}{\partial x^n} \int_0^x u(y,t)(x-y)^{n-\alpha-1} dy ,
\end{equation}
the only difference from \eqref{RLdef} being the lower limit of integration.  This is still a nonlocal operator, since it depends on the values of $u(y,t)$ at every point $0<y<x$.

\section{Absorbing boundary conditions}\label{SecDD}
Now let us impose a zero boundary condition at each endpoint:
\begin{equation}
\label{DcDDbc}
u(0,t)= u(1,t)=0\quad\text{for all $t\geq 0.$}
\end{equation}
The zero Dirichlet boundary conditions \eqref{DcDDbc} are usually called absorbing boundary conditions, but do they have the same meaning for a nonlocal operator?  To illuminate this issue, let us develop a numerical method to solve the fractional diffusion equation \eqref{DcDDeq}, paying special attention to meaning of the zero boundary conditions \eqref{DcDDbc}.

The fractional derivative \eqref{RLdef} can be approximated using the Gr\"unwald-Letnikov formula \cite[Proposition 2.1]{FCbook}
\begin{equation}
\label{GrunwaldLetnikov}
{\mathbb D}^\alpha  u(x,t) =\lim_{h\to 0}h^{-\alpha}\sum_{i=0}^\infty (-1)^i \begin{pmatrix} \alpha\\i\end{pmatrix} f(x-ih)
\end{equation}
where the Gr\"unwald weights are given by
\begin{equation}
\label{Grunwald}
g^\alpha_i= (-1)^i \begin{pmatrix} \alpha\\i\end{pmatrix}= \frac{(-1)^i\Gamma(\alpha+1)}{\Gamma(i+1)\Gamma(\alpha-i+1)}
\end{equation}
for all $i\geq 0$.  Since the finite domain fractional derivative \eqref{RLdef0} is equivalent to the Riemann-Liouville fractional derivative of a function that vanishes for $x<0$, we immediately obtain that
\begin{equation}
\label{Grunwald0}
{\mathbb D}^\alpha_{[0,x]}  u(x,t) =\lim_{h\to 0}h^{-\alpha}\sum_{i=0}^{[x/h]} g^\alpha_i f(x-ih).
\end{equation}
This approximation can be used to construct numerical solutions to the fractional diffusion equation, but the resulting methods are unstable \cite[Proposition 2.3]{frade}.  Instead, we apply a shifted Gr\"unwald formula
\begin{equation}
\label{Grunwald0s}
{\mathbb D}^\alpha_{[0,x]}  u(x,t) \approx h^{-\alpha}\sum_{i=0}^{[x/h]+1} g^\alpha_i f(x-(i-1)h)
\end{equation}
which results in a stable method \cite[Theorem 2.7]{frade}.

To illuminate the role of the boundary conditions, first consider the fractional diffusion equation \eqref{RL-CauchyEq} on the real line.  As a thought experiment, discretize $x_j=jh$ and $t_k=k\Delta t$ and apply the Gr\"unwald approximation to obtain the explicit Euler scheme
\begin{equation}
\label{Euler}
u(x_j,t_{k+1})=u(x_j,t_k)+C h^{-\alpha}\sum_{i=0}^{\infty} g^\alpha_i u(x_{j-i+1},t_k) \Delta t.
\end{equation}
The Gr\"unwald weights are $g^\alpha_0=1$, $g^\alpha_1=-\alpha$, $g^\alpha_2=\alpha(\alpha-1)/2!$ and so forth, and note that $g^\alpha_i>0$ for all $i\neq 1$.
The scheme is mass-preserving because \cite[Eq.\ (2.11)]{FCbook}
\begin{equation}
\label{sum0}
\sum_{i=0}^\infty g^\alpha_i=0 ,
\end{equation}
and hence to understand a physical model of the fractional diffusion, it will suffice to consider $u(x_j,t_k)h$ as the mass at location $x_j$ at time $t_k$.  The total mass $M_k=\sum_j u(x_j,t_k)h$ does not vary with time $t_k$, but rather remains equal to the initial mass $M_0=\sum_j u_0(x_j)h$.   The scheme moves a mass $C \Delta t h^{-\alpha-1}g^\alpha_i\ u(x_{j-i+1},t_k) h$ from location $x_{j-i+1}$ to location $x_j$ when $i\neq 1$.  The total mass $C \Delta t h^{-\alpha}\alpha\, u(x_{j},t_k)$ moved out of location $x_j$ is equal to the sum of the amounts moved from location $x_j$ to another location, because $\sum_{i\neq 1} g^\alpha_i=\alpha$.   In this scheme, mass can be transported large distances to the right, but only one step size $h$ to the left.  Note that the scheme \eqref{Euler} is also positivity preserving for $C \alpha h^{-\alpha}\Delta t\leq 1$, since a fraction $\leq 100\%$ of the mass at each point is removed, and then redistributed.

Now we want to restrict to the unit interval $0\leq x\leq 1$ and impose the zero boundary conditions \eqref{DcDDbc}.  Since we are solving a nonlocal problem, this requires some care.  Unlike a traditional diffusion equation, the Euler scheme \eqref{Euler} moves mass a long distance in one time step, for any step size. That mass can land outside the unit interval, and then it must be accounted for in the scheme.  Part of the picture is to understand how the Gr\"unwald approximation \eqref{Grunwald0} accounts for this mass.  The remaining part is to understand the zero boundary conditions.

Let us note that the discretization of the fractional diffusion equation \eqref{DcDDeq} on the bounded domain using \eqref{Grunwald0s} takes the form
\begin{equation}
\label{Euler0}
u(x_j,t_{k+1})=u(x_j,t_k)+C h^{-\alpha}\sum_{i=0}^{j+1} g^\alpha_i u(x_{j-i+1},t_k) \Delta t ,\quad \forall\ 0\leq j\leq n.
\end{equation}
Comparing with \eqref{Euler}, we can see that no mass is moved to location $x_j$ from any location $x_{j-i+1}$ when $i>j+1$, i.e., when $x_{j-i+1}<0$ lies outside the domain $0\leq x\leq 1$.

%\fbox{Comment out later:}  A change of variable in \eqref{Euler0} yields
%\[u(x_j,t_{k+1})=u(x_j,t_k)+C h^{-\alpha}\sum_{i=0}^{j+1} g^\alpha_{j-i+1} u(x_i,t_k) \Delta t ,\quad \forall\ 0\leq j\leq n.\]
%\fbox{end}

Now we impose the boundary conditions \eqref{DcDDbc} by setting $u(x_j,t_k)=0$ when $j=0$ (location $x_j=0$) or $j=n$ (location $x_j=1$), where $nh=1$.  Since our initial condition $u_0(x)$ must also satisfy the boundary conditions, we start with all the mass inside the open interval $0<x<1$.  To enforce the boundary conditions, we have to modify the Euler scheme \eqref{Euler0}.  After a simple change of variables, we can write \eqref{Euler0} in the form
\begin{equation}
\label{Euler00}
u(x_j,t_{k+1})=u(x_j,t_k)+C h^{-\alpha}\sum_{i=0}^{n} b_{ij} u(x_{i},t_k) \Delta t ,\quad \forall\ 0\leq j\leq n,
\end{equation}
where $b_{ij}=g^\alpha_{j-i+1}$ for $i\leq j+1$ and $b_{ij}=0$ for $i>j+1$.   Next we will modify certain coefficients $b_{ij}$ to enforce the boundary conditions.  First consider the left end point $x_0=0$.  Since the mass at this location has to remain zero,
\[0=\sum_{i=0}^{1} b_{i0} u(x_{i},t_k) .\]
Since $u(x_{0},t_k)=0$ for all $k$, this requires $b_{10}=0$.  Now the mass $C \Delta t h^{-\alpha-1}g^\alpha_0 u(x_{1},t_k)h $ that would have been transported from location $x_1$ to location $x_0$ is instead removed from the system, to enforce the zero boundary condition.
Next consider the right end point $x_n=1$.  Since the mass at this location has to remain zero, we require
\[0=\sum_{i=0}^{n} b_{in} u(x_{i},t_k).\]
Since $u(x_{n},t_k)=0$ for all $k$, and since all $u(x_{i},t_k)\geq 0$ for step size $\Delta t \leq h^{\alpha}/C \alpha$ and a nonnegative initial condition, we must have $b_{in}=0$ for all $i=0,1,2,n-1$.   This change alters \eqref{Euler0} by taking the mass $C \Delta t h^{-\alpha}g^\alpha_{n-i+1} u(x_i,t_k)$ that would have been transported from location $x_i<1$ to location $x_n=1$ and removing it from the system. The resulting scheme can be written in the form \eqref{Euler00} where
\begin{equation}
\label{EulerDDweights}
b_{ij}=\begin{cases}
g^\alpha_{j-i+1} &\text{if $0<j<n$ and $i\leq j+1$,}\\
0&\text{otherwise.}
\end{cases}
\end{equation}
To interpret \eqref{EulerDDweights}, recall that $C h^{-\alpha}b_{ij} u(x_{i},t_k) \Delta t $ is the mass transferred from location $x_i$ to location $x_j$ during this time step.

\begin{rem}
The astute reader will notice that \eqref{Euler0} with $j=n$ involves the mass at location $x_{n+1}=1+h$ when $j=n$, and this $x_{n+1}$ term does not appear in \eqref{Euler00}.  We could indeed track the mass moved to the location $x_{n+1}$, which is outside the domain, but with the zero boundary condition $u(x_n,t_k)=0$, none of this mass can ever come back into the domain.  Indeed, mass from location $x_{n+1}$ can only move left one step to location $x_n=1$, and the zero boundary condition forbids this.  In other words, if we did include state $x_{n+1}$ in our scheme, then we would also conclude $b_{n+1,n}=0$ by the same argument that $b_{in}=0$ for $0\leq i\leq n-1$.  Hence we need not track the mass at this location.
\end{rem}

%\begin{rem}
%For general Dirichlet boundary conditions, where we only assume that $u(0,t)=L$ and $u(1,t)=R$ for all $t\geq 0$, the only change to \eqref{EulerDDweights} is that we set $b_{00}=b_{nn}=1$, see \cite[p.\ 4]{2sided}.  Either scheme will produce the same results when $L=R=0$.
%\end{rem} % Wrong, different matrix! 1 there is 0 here.

In summary, the fractional diffusion equation \eqref{DcDDeq} on $0\leq x\leq 1$ with zero boundary conditions \eqref{DcDDbc} is indeed a model with absorbing boundary conditions.  As compared to the Euler scheme on the entire real line, here the mass scheduled for transport beyond the boundary of the unit interval is instead deleted from the system, or absorbed.  This scheme is also positivity preserving so long as $C \alpha\Delta t h^{-\alpha}<1$, since a fraction of the mass at each point is removed, and then redistributed or absorbed.

Write $\beta=C h^{-\alpha}\Delta t$, $u_j^k=u(x_j,t_k)$, the solution vector ${\mathbf u}_k=[u_0^k,\ldots,u_n^k]$, and the $(n+1) \times (n+1)$ iteration matrix $B=[b_{ij}]$.  Then we can express the explicit Euler scheme \eqref{Euler00} in vector-matrix form
\begin{equation}
\label{Euler0e}
{\mathbf u}_{k+1}={\mathbf u}_k + \beta  {\mathbf u}_k B .
\end{equation}
In this form, the $ij$ entry of the matrix $B$ is proportional to the rate at which mass is transferred from location $x_i$ to location $x_j$.  Equivalently, we can write
\begin{equation}
\label{Euler0eT}
{\mathbf u}_{k+1}^T={\mathbf u}_k^T + \beta B^T {\mathbf u}_k^T .
\end{equation}

The formulation \eqref{Euler0eT} is traditional in numerical analysis, e.g., see \cite[p.\ 4]{2sided}, while \eqref{Euler0e} is used for Markov chains, e.g., see \cite[Section 8.1]{MathModeling}.

%\fbox{Comment out later:}
%Define $\beta=C h^{-\alpha}\Delta t$ and $u_j^k=u(x_j,t_k)$ and write out the system of equations \eqref{Euler00} in the form:
%\begin{equation*}\begin{split}
%u_0^{k+1}&=u_0^k+\beta b_{00} u_0^k+\cdots + \beta b_{n0} u_n^k\\
%u_1^{k+1}&=u_1^k+\beta b_{01} u_0^k+\cdots + \beta b_{n1} u_n^k\\
%&\vdots\\
%u_n^{k+1}&=u_n^k+\beta b_{0n} u_0^k+\cdots + \beta b_{nn} u_n^k
%\end{split}\end{equation*}
%and rewrite in vector-matrix form as
%\[\begin{pmatrix} u_0^{k+1}\\ u_1^{k+1}\\ \vdots\\ u_n^{k+1}\end{pmatrix}
%=\begin{pmatrix} u_0^{k}\\ u_1^{k}\\ \vdots\\ u_n^{k}\end{pmatrix}
%+\beta\begin{pmatrix} b_{00}&b_{10} &\cdots&b_{n0}\\
%b_{01}&b_{11} &\cdots&b_{n1}\\
%\vdots&&&\vdots\\
%b_{0n}&b_{1n} &\cdots&b_{nn}\\
%\end{pmatrix}\begin{pmatrix} u_0^k\\ u_1^k\\ \vdots\\ u_n^k\end{pmatrix} .\]
%Equivalently, we can write
%\[\begin{pmatrix} u_0^{k+1}& u_1^{k+1}& \cdots& u_n^{k+1}\end{pmatrix}
%=\begin{pmatrix} u_0^{k}& u_1^{k}& \cdots& u_n^{k}\end{pmatrix}
%+\beta\begin{pmatrix} u_0^k& u_1^k& \cdots& u_n^k\end{pmatrix}
%\begin{pmatrix} b_{00}&b_{01} &\cdots&b_{0n}\\
%b_{10}&b_{11} &\cdots&b_{1n}\\
%\vdots&&&\vdots\\
%b_{n0}&b_{n1} &\cdots&b_{nn}\\
%\end{pmatrix} .\]
%The matrix $B$ is exactly the same as in the p.27 of {\tt Bounded Domains v4 JDiffEq} if you delete the state state $x_0$.
%\fbox{end}
The explicit Euler scheme \eqref{Euler0eT} is stable under a step size condition $\alpha\beta<1$, or equivalently, $\Delta t<h^{\alpha}/C\alpha $, see \cite[Proposition 2.1]{2sided}.  The implicit Euler scheme
\begin{equation}
\label{Euler0i}
{\mathbf u}_{k+1}^T={\mathbf u}_k^T + \beta B^T {\mathbf u}_{k+1}^T
\end{equation}
is unconditionally stable \cite[Theorem 2.7]{frade}.  As noted in the Introduction, by now there are a wide variety of numerical methods to solve this problem.  For example, the explicit Euler scheme \eqref{Euler0e} can be viewed as the temporal discretization of a linear system of ordinary differential equations (method of lines, e.g., see \cite{BKStams,LAT}), and then any standard method for solving the linear system can be employed.

\begin{rem}\label{theoryDDrem}
Theoretical properties of the solution are discussed in \cite{SpaceTimeFrac,Sankaranarayanan2014}.  There it is shown that the Cauchy problem \eqref{DcDDeq} on $0\leq x\leq 1$ with zero boundary conditions \eqref{DcDDbc} (or equivalently, zero exterior condition) is well-posed: There exists a unique solution for any initial condition $u_0(x)$ that depends continuously on this initial function.  The general theory in \cite{SpaceTimeFrac} applies on the Banach space $C_0(0,1)$ of continuous functions that vanish at the end points, with the supremum norm.  In \cite{Sankaranarayanan2014} the Banach space $L^1[0,1]$ is considered.  Since both the implicit and explicit Euler methods are consistent, and stable (in the explicit case, under a step size condition on $\Delta t$), and since the problem \eqref{DcDDeq} on $0\leq x\leq 1$ with zero boundary conditions is well-posed, the Lax Equivalence Theorem \cite[p.\ 45]{Richtmyer} implies that either of these Euler methods will converge to the unique solution as $h\to 0$ and $\Delta t\to 0$.  The same is true for any other stable, consistent numerical method.  The theory in \cite{SpaceTimeFrac} also relates the Cauchy problem \eqref{DcDDeq} on $0\leq x\leq 1$ with zero boundary conditions to a probability model, which implies that the problem is positivity preserving.  The analysis in \cite{Sankaranarayanan2014} also computes the exact domain of the generator ${\mathbb D}^\alpha_{[0,x]}$ on $L^1[0,1]$ with zero boundary conditions.
\end{rem}

\begin{figure}
\begin{center}
\hskip-0.5in
\includegraphics[width=4in]{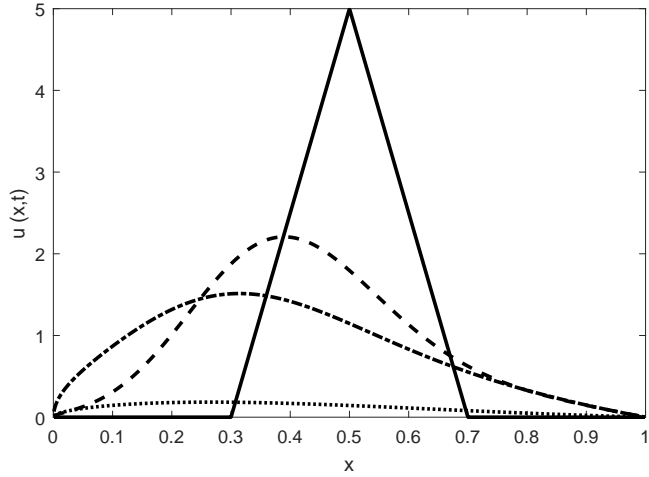}
\caption{Numerical solution of the fractional diffusion equation \eqref{DcDDeq} with $\alpha =1.5$ and $C=1$ on $0\leq x\leq 1$ with zero boundary conditions at time $t=0$ (solid line), $t=0.05$ (dashed), $t=0.1$ (dash dot), $t=0.5$ (dotted).}
\label{DcDDfig}
\end{center}
\end{figure}

Figure \ref{DcDDfig} shows a numerical solution of the fractional diffusion equation \eqref{DcDDeq} on $0\leq x\leq 1$ with zero boundary conditions \eqref{DcDDbc}.  The solution was plotted using the MATLAB routine {\tt ode15s} for stiff systems of ordinary differential equations, viewing the explicit Euler scheme \eqref{Euler0e} as the temporal discretization of a linear system of ordinary differential equations  (method of lines), with time step $\Delta t = 0.01$ and spatial grid size $h=0.001$.  The tent function initial condition
\begin{equation}
u_0(x) = \begin{cases}
25x-7.5 & \text{for $0.3 < x \leq 0.5$,}\\
-25x+17.5, & \text{for $0.5 < x < 0.7$,}\\
0 & \text{otherwise}
\end{cases}
\label{IC}
\end{equation}
satisfies the zero boundary conditions, and integrates to total mass $M=1$.  Because of the absorbing boundary conditions, solutions tend to zero as $t$ increases.  Refining the temporal or spatial discretization resulted in no visible change in the plots.  Because the fractional derivative \eqref{RLdef0} is one sided, solution curves are skewed for all $t>0$, even though the initial mass distribution is symmetric.  This can also be seen from \eqref{Euler0}, since the mass $\beta g^\alpha_0 u^k_i$ moved from state $x_i$ to state $x_{i-1}$ exceeds the total amount of mass moved to the right (which is less than $\beta (\alpha-1) u^k_i$), at any node inside the domain.

\begin{rem}
A few crucial differences from the traditional diffusion setup should be noted.  First of all, one can also characterize the physical problem as absorbing on the exterior of the open domain $0<x<1$, not just at the boundary.  Physically, mass can be displaced a long distance from the domain, and then absorbed.  Second, the form of the fractional derivative \eqref{RLdef0} also incorporates absorbing outside the domain.  The fractional diffusion equation \eqref{RL-CauchyEq} on the real line with the exterior condition $u(x,t)=0$ for $x\leq 0$ or $x\geq 1$ is equivalent to the fractional diffusion equation \eqref{DcDDeq} on the bounded domain $0\leq x\leq 1$ with zero boundary conditions \eqref{DcDDbc}.  The fractional derivative itself codes the zero exterior condition on $x<0$.  For more details, and an interesting connection to stochastic processes, see \cite{SpaceTimeFrac}. Third, since the positive Riemann-Liouville fractional derivative \eqref{RLdef} is one-sided, depending only on values of the function to the left, the zero exterior condition on $x\geq 1$ is automatically enforced.  Another way to see this is that, in the Euler scheme, mass can be transported to location $x_j$ from any location to the left, but not from the right.
\end{rem}

\section{Reflecting boundary conditions}\label{secRR}
The proper formulation of physically meaningful reflecting boundary conditions for the fractional diffusion equation \eqref{DcDDeq} on a bounded domain requires careful consideration of the nonlocal operator \eqref{RLdef0}.  Suppose that our goal is for mass leaving the domain to instead come to rest at the boundary.  Unlike the traditional diffusion setup, this mass can come from far inside the domain, not just an adjacent grid point.  Now the mass that was removed from the system in the Dirichlet model of Section \ref{SecDD} will instead be preserved, and moved to the boundary.

Let us consider the right boundary $x_n=1$, since long movements are always to the right in our setup.  For each $i=1,2,\ldots,n-1$, at each time step, mass $\beta\alpha u_i^k$ is moved out of location $x_i$, and redistributed.  Of this total, a fraction $\beta g^\alpha_{j-i+1} u_i^k$ is moved to location $x_j$ when $j=i-1$ or $j>i$.  Hence the mass landing at, or exiting the domain through, the right boundary $x_n=1$ from location $x_i$ for $i=1,2,\ldots,n-1$ is
\[\sum_{j=n}^\infty\beta g^\alpha_{j-i+1} u_i^k .\]
Then using \eqref{sum0}, along with the identity \cite[Eq.\ (20.4)]{Samko}
\begin{equation}
\label{gsum}
\sum_{j=0}^{n} g^\alpha_{j}=g^{\alpha-1}_{n},
\end{equation}
we set $b_{in}=-g^{\alpha-1}_{n-i}$ in \eqref{Euler00}, for each $i=1,2,\ldots,n-1$.
%\fbox{Comment out later}
%\[b_{in}=\sum_{j=n}^\infty g^\alpha_{j-i+1}=\sum_{j=n-i+1}^\infty g^\alpha_{j}=-\sum_{j=0}^{n-i} g^\alpha_{j}=-g^{\alpha-1}_{n-i}>0\]
%\fbox{end}
In the scheme \eqref{Euler} on the real line, the mass $\beta g^\alpha_{0} u_n^k$ moves from location $x_n$ to location $x_{n-1}$, and the remainder of the mass $\beta \alpha u_n^k$ leaving location $x_n$ moves to the right, outside the domain $0\leq x\leq 1$.  In the reflecting scheme, we retain this mass at location $x_n$ by setting $b_{nn}=-1=-g^\alpha_{0}$.

The only way that mass can move to the left boundary $x_0=0$ in this scheme is from the adjacent node $x_1=h$, hence we leave $b_{10}=g^\alpha_0=1$.  In the scheme \eqref{Euler} on the real line, mass $\beta g^\alpha_{0} u_0^k$ moves from location $x_0$ to location $x_{-1}<0$.  To prevent this, and thus to keep the scheme mass-preserving, recall that $g^\alpha_0=1$ and $g^\alpha_1=-\alpha$, and set $b_{00}=1-\alpha$.  To prevent mass leaving state $x_0$ from jumping through the right boundary, we also set
\[b_{0n}=\sum_{j=n}^\infty g^\alpha_{j+1}=-g^{\alpha-1}_{n}>0 ,\]
 and hence the explicit Euler scheme for the case of reflecting boundary conditions is written in the form \eqref{Euler00} with
 \begin{equation}
\label{EulerRRweights}
b_{ij}=\begin{cases}
g^\alpha_{j-i+1} &\text{if $0< j<n$ and $i\leq j+1$,}\\
1&\text{if $i=1$ and $j=0$,}\\
1-\alpha &\text{if $i=j=0$,}\\
-g^{\alpha-1}_{n-i}&\text{if $j=n$ and $i\leq n$,}\\
0&\text{otherwise.}
\end{cases}
\end{equation}
%\fbox{Comment out later:}
%This matrix $B$ is exactly the same as in the p.28 top of {\tt Bounded Domains v4 JDiffEq} if you delete the state state $x_0$, except that there you set $b_{11}=1-\alpha$ to keep that row mass preserving without the state $x_0$. \fbox{end}

Next we will argue that the reflecting boundary conditions for the fractional diffusion equation \eqref{DcDDeq} on $0\leq x\leq 1$ can be written in the form
\begin{equation}
\label{nofluxBC}
{\mathbb D}^{\alpha-1}_{[0,x]}  u(0,t)={\mathbb D}^{\alpha-1}_{[0,x]}  u(1,t)=0\quad\text{for all $t\geq 0,$}
\end{equation}
using the Riemann-Liouville fractional derivative \eqref{RLdef0} of order $\alpha-1$.
When $\alpha=2$, this reduces to the classical reflecting condition $\frac{\partial}{\partial x}u(x,t)=0$ at the boundary.  First consider the right boundary $x_n=1$, and write out the iteration equation for this node: From \eqref{Euler00} and \eqref{EulerRRweights} with $\beta=C h^{-\alpha}\Delta t$ we have
%\fbox{Comment out later}
%\begin{equation*}\begin{split}
%u_n^{k+1}-u_n^k&=-\beta g^{\alpha-1}_{n} u_0^k-\cdots-\beta g^{\alpha-1}_{1} u_{n-1}^k-\beta g^{\alpha-1}_{0}u_n^k\\
%\frac{u_n^{k+1}-u_n^k}{\Delta t}&=-Ch^{-\alpha}\sum_{i=0}^{n} g^{\alpha-1}_{n-i} u_i^k\\
%h\frac{u_n^{k+1}-u_n^k}{\Delta t}&=-Ch^{1-\alpha}\sum_{i=0}^{n} g^{\alpha-1}_{n-i} u_i^k
%\end{split}\end{equation*}
%\fbox{end}
$u_n^{k+1}=u_n^k-\beta g^{\alpha-1}_{n} u_0^k-\cdots-\beta g^{\alpha-1}_{1} u_{n-1}^k-\beta g^{\alpha-1}_{0} u_n^k$ which is algebraically equivalent to
\[h\frac{u_n^{k+1}-u_n^k}{\Delta t}=-Ch^{1-\alpha}\sum_{i=0}^{n} g^{\alpha-1}_{n-i} u_i^k =-Ch^{1-\alpha}\sum_{i=0}^{n} g^{\alpha-1}_{n-i} u(x_n-(n-i)h,t_k) .\]
Letting $\Delta t\to 0$ and $h\to 0$, and using the Gr\"unwald approximation \eqref{Grunwald0}, we arrive at the reflecting boundary condition \eqref{nofluxBC} at the right boundary $x=1$.

The iteration equation at the left boundary is $u_0^{k+1}=u_0^k+\beta (1-\alpha) u_0^k+\beta u_{1}^k$. Recalling that $g^{\alpha-1}_{0}=1$ and $g^{\alpha-1}_{1}=1-\alpha$, this reduces to
\[h\frac{u_0^{k+1}-u_0^k}{\Delta t}=Ch^{1-\alpha}\sum_{i=0}^{1} g^{\alpha-1}_{1-i} u_i^k ,\]
which is consistent with the reflecting boundary condition \eqref{nofluxBC} at the left boundary $x=0$.  To rigorously prove that the left boundary condition in \eqref{nofluxBC} holds, \cite{Sankaranarayanan2014} extends the matrix $h^{-\alpha}B$ by interpolation to an operator on $L^1[0,1]$, and proves convergence to the generator \eqref{RLdef0} with boundary conditions \eqref{nofluxBC}.

\begin{rem}\label{FluxRemark}
The reflecting boundary conditions \eqref{nofluxBC} can be seen as zero flux conditions at the boundary:  Note that the fractional diffusion equation \eqref{DcDDeq} can be derived from the traditional conservation of mass equation
\begin{equation}
\label{COM}
\frac{\partial }{\partial t} u(x,t)=-\frac{\partial }{\partial x} q(x,t)
\end{equation}
together with the flux equation (or fractional Fick's Law, see \cite{Schumer2001})
\begin{equation}
\label{fracFickRL}
q(x,t)=-C{\mathbb D}^{\alpha-1}_{[0,x]}  u(x,t)=-C\frac{\partial}{\partial x} \frac{1}{\Gamma(2-\alpha)} \int_{0}^x u(y,t)(x-y)^{1-\alpha} dy .
\end{equation}
Hence \eqref{nofluxBC} simply sets the flux to zero at the boundary.  When $\alpha=2$, the fractional Fick's Law reduces to the traditional Fick's Law $q(x,t)=-C\frac{\partial}{\partial x}u(x,t)$.
\end{rem}

\begin{figure}
\begin{center}
\hskip-0.5in
\includegraphics[width=4in]{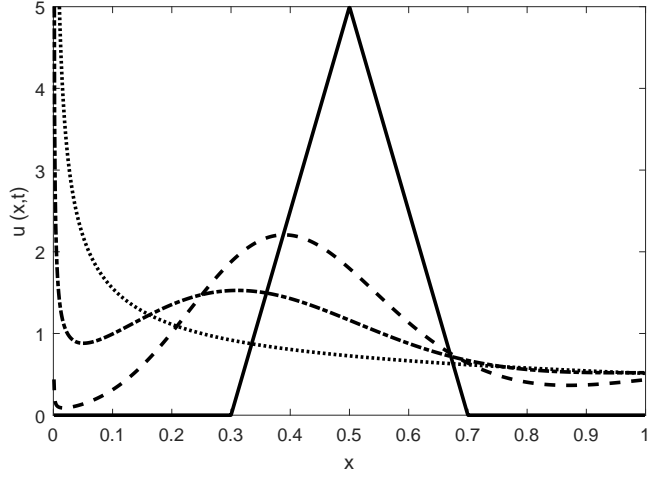}
\caption{Numerical solution of the fractional diffusion equation \eqref{DcDDeq} with $\alpha =1.5$ and $C=1$ on $0\leq x\leq 1$ with reflecting boundary conditions \eqref{nofluxBC} at time $t=0$ (solid line), $t=0.05$ (dashed), $t=0.1$ (dash dot), $t=0.5$ (dotted). }
\label{DNNfig}
\end{center}
\end{figure}

Figure \ref{DNNfig} shows a numerical solution of the fractional diffusion equation \eqref{DcDDeq} on $0\leq x\leq 1$ with reflecting boundary conditions, using the same numerical method and initial function as in Figure \ref{DcDDfig}.  As in Figure \ref{DcDDfig}, and for the same reason, solution curves are skewed for all $t>0$, even though the initial mass distribution is symmetric.  However, there is a profound difference in the solutions.  Here the total mass (area under the curve) remains equal to the initial mass $M=1$ for all $t>0$, because of the reflecting boundary conditions.  As $t$ increases, the solutions approach the steady state solution $u(x)=(\alpha-1)x^{\alpha-2}$ on $0<x<1$.

\begin{rem}\label{theoryNNrem}
In \cite{Sankaranarayanan2014} it is shown that the Cauchy problem \eqref{DcDDeq} with reflecting boundary conditions \eqref{nofluxBC} is well-posed on the Banach space $L^1[0,1]$, and the exact domain of the generator is computed.
%Since the convergence and stability of the implicit or explicit Euler method do not depend on the boundary terms, it follows as in Remark \ref{theoryDDrem} that these Euler methods converge to the unique solution as $h\to 0$ and $\Delta t\to 0$.
\end{rem}

\begin{rem}\label{SteadyRL}
The general steady state solution to the fractional diffusion equation \eqref{DcDDeq} is $u(x)=c_1 x^{\alpha-1}+ c_2x^{\alpha-2}$ where $c_1,c_2$ are arbitrary real numbers.  To see this, note that the Riemann-Liouville fractional derivative ${\mathbb D}^\alpha_{[0,x]}  u(x)=\frac{d^2}{dx^2}{\mathbb J}^{2-\alpha}_{[0,x]}  u(x)$ where the Riemann-Liouville fractional integral
\begin{equation}
\label{RLintDef0}
{\mathbb J}^{\gamma}_{[0,x]}   u(x) = \frac{1}{\Gamma(\gamma)} \int_{0}^x u(y,t)(x-y)^{\gamma-1} dy
\end{equation}
for any $\gamma>0$.  Using the general formula (e.g., see \cite[Example 2.7]{FCbook})
\begin{equation}
\label{RLintPower}
{\mathbb J}^{\gamma}_{[0,x]}  [x^p]=  \frac{\Gamma(p+1)}{\Gamma(p+\gamma+1)}x^{p+\gamma}
\end{equation}
we see that
\[{\mathbb J}^{2-\alpha}_{[0,x]}  u(x)= c_1\Gamma(\alpha)x+c_2\Gamma(\alpha-1) .\]
Then ${\mathbb D}^\alpha_{[0,x]}  u(x)=0$ for all $0<x<1$. The only steady state solution with total mass 1 that satisfies the reflecting boundary conditions \eqref{nofluxBC} has $c_1=0$ and $c_2=\alpha-1$.   The only steady state solution that satisfies the absorbing boundary conditions \eqref{nofluxBC} has $c_1=0$ and $c_2=0$.
\end{rem}

\section{Absorbing on one side, reflecting on the other}
 Next we consider the fractional diffusion equation \eqref{DcDDeq} on $0\leq x\leq 1$ with a reflecting boundary condition on the left, and an absorbing boundary condition on the right:
 \begin{equation}
\label{caseRA}
{\mathbb D}^{\alpha-1}_{[0,x]}  u(0,t)=0\quad\text{and}\quad u(1,t)=0\quad\text{for all $t\geq 0\,$}
\end{equation}
The explicit Euler scheme for this problem is \eqref{Euler00} with
\begin{equation}
\label{EulerRAweights}
b_{ij}=\begin{cases}
g^\alpha_{j-i+1} &\text{if $0< j<n$ and $i\leq j+1$,}\\
1&\text{if $i=1$ and $j=0$,}\\
1-\alpha &\text{if $i=j=0$,}\\
0&\text{otherwise.}
\end{cases}
\end{equation}
This combines the reflecting boundary condition at $x_0=0$ from \eqref{EulerRRweights} and the absorbing boundary condition at $x_0=1$ from \eqref{EulerDDweights}.

Figure \ref{figND} shows the resulting numerical solution of the fractional diffusion equation \eqref{DcDDeq} on $0\leq x\leq 1$ with boundary conditions \eqref{caseRA}, using the same numerical method and initial function as in Figure \ref{DcDDfig}.  The solutions are skewed to the right, and approach the steady state solution $u=0$ as $t$ increases.  In this model, mass accumulates at the reflecting boundary $x=0$, but then will eventually be absorbed at the right boundary $x=1$.

\begin{figure}
\begin{center}
\hskip-0.5in
\includegraphics[width=4in]{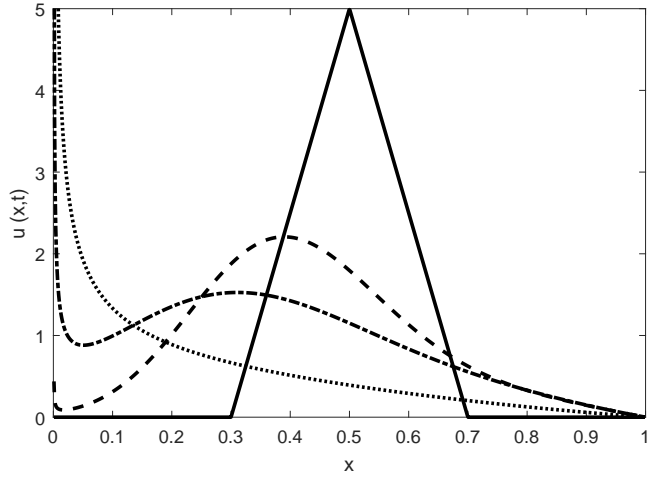}
\caption{Numerical solution of the fractional diffusion equation \eqref{DcDDeq} with $\alpha =1.5$ and $C=1$ on $0\leq x\leq 1$ with boundary conditions \eqref{caseRA}: Reflecting on the left, and absorbing on the right at time $t=0$ (solid line), $t=0.05$ (dashed), $t=0.1$ (dash dot), $t=0.5$ (dotted). }
\label{figND}
\end{center}
\end{figure}

Next we consider the opposite case, the fractional diffusion equation \eqref{DcDDeq} on $0\leq x\leq 1$ with an absorbing boundary condition on the left, and a reflecting boundary condition on the right:
\begin{equation}
\label{caseAR}
u(0,t)=0\quad\text{and}\quad{\mathbb D}^{\alpha-1}_{[0,x]}  u(1,t)=0\quad\text{for all $t\geq 0.$}
\end{equation}
The explicit Euler scheme for this problem is \eqref{Euler00} with
 \begin{equation}
\label{EulerARweights}
b_{ij}=\begin{cases}
g^\alpha_{j-i+1} &\text{if $0< j<n$ and $i\leq j+1$,}\\
-g^{\alpha-1}_{n-i}&\text{if $j=n$ and $i\leq n$,}\\
0&\text{otherwise.}
\end{cases}
\end{equation}
This combines the absorbing boundary condition at $x_0=0$ from \eqref{EulerDDweights} and the reflecting boundary condition at $x_0=1$ from \eqref{EulerRRweights}.

Figure \ref{figDN} shows the resulting numerical solution of the fractional diffusion equation \eqref{DcDDeq} on $0\leq x\leq 1$ with boundary conditions \eqref{caseAR}, using the same numerical method and initial function as in Figure \ref{DcDDfig}.  The solutions are skewed to the right, and approach the steady state solution $u=0$ as $t$ increases.  In this model, mass is reflected at the right boundary, and then eventually absorbed at the left boundary.

\begin{figure}
\begin{center}
\hskip-0.5in
\includegraphics[width=4in]{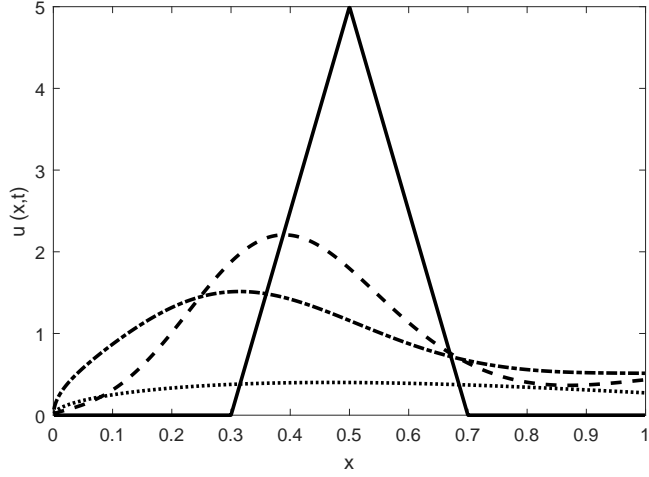}
\caption{Numerical solution of the fractional diffusion equation \eqref{DcDDeq} with $\alpha =1.5$ and $C=1$ on $0\leq x\leq 1$ with boundary conditions \eqref{caseAR}: Reflecting on the right, and absorbing on the left at time $t=0$ (solid line), $t=0.05$ (dashed), $t=0.1$ (dash dot), $t=0.5$ (dotted). }
\label{figDN}
\end{center}
\end{figure}

\begin{rem}\label{theoryNDDNrem}
In \cite{Sankaranarayanan2014} it is shown that the Cauchy problem \eqref{DcDDeq} on $0\leq x\leq 1$ with boundary conditions \eqref{caseRA} or \eqref{caseAR} is well-posed on the Banach space $L^1[0,1]$, and the exact domain of the generator is computed.  Then it follows as in Remark \ref{theoryDDrem} that any stable and consistent numerical method converges to the unique solution.
\end{rem}

\section{Caputo fractional flux}
An alternative to the fractional diffusion equation \eqref{DcDDeq} is the Caputo fractional flux model.  The Caputo fractional derivative is defined by
\begin{equation}
\label{Cdef0}
\partial^\gamma_{[0,x]}  u(x) = \frac{1}{\Gamma(n-\gamma)} \int_0^x u^{(n)}(y)(x-y)^{n-\gamma-1} dy
\end{equation}
for $\gamma>0$ and $n-1<\gamma\leq n$, where $u^{(n)}(x)$ is the $n$th derivative.  It differs from \eqref{RLdef0} in that the derivative is moved inside the integral.  These two fractional derivatives are not equivalent.  For example,
\begin{equation}
\label{RLtoCaputo}
\partial^\gamma_{[0,x]}  u(x) ={\mathbb D}^\gamma_{[0,x]}  u(x) -u(0)\frac{x^{-\gamma}}{\Gamma(1-\gamma)}
\end{equation}
when $0<\gamma<1$ \cite[Eq.\ (2.33)]{FCbook}.  Recall from Remark \ref{FluxRemark} that the fractional diffusion equation \eqref{DcDDeq} can be derived from the conservation of mass equation \eqref{COM} and the Riemann-Liouville fractional Fick's Law \eqref{fracFickRL}.  Noting that Fick's Law is purely empirical, we can instead consider the Caputo fractional flux
\begin{equation}
\label{fracFickC}
q(x,t)=-C\partial^{\alpha-1}_{[0,x]}  u(x,t)=-\frac{C}{\Gamma(2-\alpha)} \int_{0}^x u'(y,t)(x-y)^{1-\alpha} dy ,
\end{equation}
where $u'(x,t)$ denotes the $x$ derivative.  This leads to the fractional diffusion equation with Caputo flux:
\begin{equation}
\label{C-CauchyEq0}
\frac{\partial}{\partial t}u(x,t)= C\, {\mathbf D}^\alpha_{[0,x]}  u(x) ,
\end{equation}
where the {\it Patie-Simon fractional derivative} is defined by
\begin{equation}
\label{PSdvt}
{\mathbf D}^\alpha_{[0,x]}  u(x) =\frac{1}{\Gamma(2-\alpha)} \frac{\partial}{\partial x}\int_0^x \frac{\partial}{\partial x} u(x-y)y^{-\alpha} dy
\end{equation}
for $1<\alpha<2$.
Patie and Simon \cite[p.\ 570]{PatieSimon} showed that this operator is the (backward) generator of a standard spectrally negative $\alpha$-stable process reflected to stay positive \cite[p.\ 573]{PatieSimon}.
Use the relation \eqref{RLtoCaputo} and the definition \eqref{RLdef0} to see that
\begin{equation}\begin{split}\label{PStoRL}
{\mathbf D}^\alpha_{[0,x]}  f(x) &=\frac{d}{dx}\left[\partial^{\alpha-1}_{[0,x]}  f(x)\right]\\
&=\frac{d}{dx} \left[{\mathbb D}^{\alpha-1}_{[0,x]}  f(x) -f(0)\frac{x^{1-\alpha}}{\Gamma(2-\alpha)}\right]\\
&={\mathbb D}^{\alpha}_{[0,x]}  f(x)-f(0)\frac{x^{-\alpha}}{\Gamma(1-\alpha)}
\end{split}\end{equation}
which relates the two derivatives when $1<\alpha<2$.

A Gr\"unwald finite difference scheme for the fractional derivative \eqref{PSdvt} can be written as
\begin{equation}
\label{GrunwaldPS}
{\mathbf D}^\alpha_{[0,x]}  f(x) =\lim_{h\to 0} h^{-\alpha}\left[\sum_{i=0}^{j+1} g^\alpha_i f(x-(i-1)h)- g^{\alpha-1}_{j+1} f(x-jh)\right]
\end{equation}
where $j=j(h)=[x/h]+1$.  To see this, apply \cite[Proposition 2.1]{FCbook} to see that the first term in \eqref{GrunwaldPS} converges to ${\mathbb D}^{\alpha}_{[0,x]}  f(x)$.  Then note that $x-jh\to 0$ as $h\to 0$, and that \cite[Eq.\ (2.5)]{FCbook}
\begin{equation}\label{GrunwaldWeightsAsy}
g^{\alpha-1}_j\sim \frac{1-\alpha}{\Gamma(2-\alpha)} j^{-\alpha}\quad\text{as $j\to\infty$,}
\end{equation}
meaning that the ratio between the left and right terms tends to 1 as $j\to\infty$.  Then
\begin{equation*}\begin{split}
h^{-\alpha}g^{\alpha-1}_{j+1}&\sim h^{-\alpha} \frac{1-\alpha}{\Gamma(2-\alpha)} ([x/h]+1)^{-\alpha}\\
&\sim \frac{1-\alpha}{\Gamma(2-\alpha)} x^{-\alpha} = \frac{x^{-\alpha}}{\Gamma(1-\alpha)}
\end{split}\end{equation*}
using $\Gamma(z+1)=z\Gamma(z)$.  Then \eqref{GrunwaldPS} follows using \eqref{PStoRL}.

Next we construct an explicit Euler scheme \eqref{Euler00} for the fractional diffusion equation with Caputo flux \eqref{C-CauchyEq0}.  For $x=x_j=x-jh$ and $t_k=k\Delta t$, note that the Gr\"unwald approximation of the Patie-Simon fractional derivative is
\[{\mathbf D}^\alpha_{[0,x]}  u^k_j\approx h^{-\alpha}\left[\sum_{i=0}^{j+1} g^\alpha_{j-i+1} u^k_i- g^{\alpha-1}_{j+1} u_0^k\right]=h^{-\alpha}\sum_{i=0}^{n} b_{ij} u^k_i\]
where $b_{0j}=g^\alpha_{j+1}-g^{\alpha-1}_{j+1}$, $b_{ij}=g^\alpha_{j-i+1}$ for $0<i\leq j+1$, and $b_{ij}=0$ for $i>j+1$.  Hence the only change in the iteration matrix $B=[b_{ij}]$ is in the top row.
From \eqref{gsum} it follows easily that
\begin{equation}
\label{gdiff}
g^\alpha_{n}-g^{\alpha-1}_{n}=-g^{\alpha-1}_{n-1} ,
\end{equation}
and hence we can write $b_{0j}=-g^{\alpha-1}_{j}$.
%\fbox{Comment out later}
%From \eqref{gsum} we get
%\begin{equation} \label{grunwaldidentitysamko}
%g^\alpha_i -g^{\alpha -1}_i= g^\alpha_i - \sum_{k=0}^i g^{\alpha}_k =-\sum_{k=0}^{i-1} g^{\alpha}_k =-g^{\alpha-1}_{i-1}.
%\end{equation}
%\fbox{end}
Now in order to solve the fractional diffusion equation with Caputo flux \eqref{C-CauchyEq0}, we need only to enforce appropriate boundary conditions.

First assume zero boundary conditions.  As in \eqref{EulerDDweights} it is sufficient to set $b_{ij}=0$ for $j=0$ or $j=n$, since a mass proportional to $b_{ij}$ is transported from location $x_i$ to location $x_j$, and we want this mass to vanish.  Then, we obtain the explicit Euler scheme \eqref{Euler00} with weights
\begin{equation}
\label{EulerDDweightsPS}
b_{ij}=\begin{cases}
g^\alpha_{j-i+1} &\text{if $0<j<n$ and $0<i\leq j+1$,}\\
-g^{\alpha-1}_{j}&\text{if $i=0$ and $0< j< n$,}\\
0&\text{otherwise.}
\end{cases}
\end{equation}
The iteration matrix $B=[b_{ij}]$ differs from \eqref{EulerDDweights} only in the first row $i=0$.  Since the mass at the left endpoint $x_0=0$ is always zero in this case, there is no difference in the solutions, and hence Figure \ref{DcDDfig} is also the solution to the fractional diffusion equation with Caputo flux \eqref{C-CauchyEq0} and absorbing boundary conditions \eqref{DcDDbc}.  In fact, since we assume a zero boundary condition on the left, $u(0,t)=0$ for all $t>0$, the fractional diffusion equation with Caputo flux \eqref{C-CauchyEq0} and the Riemann-Liouville fractional diffusion equation \eqref{DcDDeq} on $0\leq x\leq 1$ are equivalent, due to the relation \eqref{PStoRL}.

Next consider a reflecting boundary condition on both sides. Since the iteration matrix $B=[b_{ij}]$ has not changed except in the first row, the argument in Section \ref{secRR} applies for every state $x_j$ with $j>0$, i.e., we set $b_{jn}=-g^{\alpha-1}_{n-i}$ for all $i=1,2,\ldots,n$ as in \eqref{EulerRRweights}.  As for the first row, the only way that mass can move to the left boundary $x_0=0$ in this scheme is from the adjacent node $x_1=h$, hence we leave $b_{10}=g^\alpha_0=1$.  In the scheme \eqref{Euler00} for the Patie-Simon fractional derivative, we have $b_{00}=-g^{\alpha-1}_{0}=-1$.  To prevent mass from state $x_0=0$ jumping through the right boundary $x_n=1$, since $b_{0j}=-g^{\alpha-1}_{j}$ for $j=1,2,\ldots,n-1$, and since we require $\sum_{j}b_{ij}=0$ for a mass-preserving scheme, we must set
\[b_{0n}=-\sum_{j=0}^{n-1} b_{0j}=1+\sum_{j=1}^{n-1} g^{\alpha-1}_{j}=\sum_{j=0}^{n-1} g^{\alpha-1}_{j}=g^{\alpha-2}_{n-1}\]
using \eqref{gsum}.
Hence the explicit Euler scheme for this problem is \eqref{Euler00} with
 \begin{equation}
\label{EulerRRweightsPS}
b_{ij}=\begin{cases}
g^\alpha_{j-i+1} &\text{if $0< j<n$ and $0<i\leq j+1$,}\\
1&\text{if $i=1$ and $j=0$,}\\
-1 &\text{if $i=j=0$,}\\
-g^{\alpha-1}_{j}&\text{if $i=0$ and $0< j< n$,}\\
g^{\alpha-2}_{n-1}&\text{if $j=n$ and $i=0$,}\\
-g^{\alpha-1}_{n-i}&\text{if $j=n$ and $0<i\leq n$,}\\
0&\text{otherwise.}
\end{cases}
\end{equation}

Next we will argue that the reflecting boundary conditions for the fractional diffusion equation with Caputo flux \eqref{C-CauchyEq0} on $0\leq x\leq 1$ can be written in the form
\begin{equation}
\label{PSnofluxBC}
\partial^{\alpha-1}_{[0,x]}  u(0,t)=\partial^{\alpha-1}_{[0,x]}  u(1,t)=0\quad\text{for all $t\geq 0$,}
\end{equation}
using the Caputo derivative \eqref{Cdef0}.  That is, the reflecting boundary conditions zero out the Caputo flux at the boundary.  First consider the right boundary $x_n=1$, and write out the iteration equation for this node: From \eqref{Euler00} and \eqref{EulerRRweightsPS} with $\beta=C h^{-\alpha}\Delta t$ we have \[u_n^{k+1}=u_n^k-\beta g^{\alpha-2}_{n-1} u_0^k-\beta g^{\alpha-1}_{n-1} u_1^k-\cdots-\beta g^{\alpha-1}_{1} u_{n-1}^k-\beta g^{\alpha-1}_{0}u_n^k .\]
%\fbox{Comment out later}
%\begin{equation*}\begin{split}
%u_n^{k+1}-u_n^k&=\beta g^{\alpha-2}_{n-1} u_0^k-\beta g^{\alpha-1}_{n-1} u_1^k-\cdots-\beta g^{\alpha-1}_{1} u_{n-1}^k-\beta g^{\alpha-1}_{0}u_n^k\\
%u_n^{k+1}-u_n^k&=\beta g^{\alpha-2}_{n} u_0^k-\beta g^{\alpha-1}_{n} u_0^k-\beta g^{\alpha-1}_{n-1} u_1^k-\cdots-\beta g^{\alpha-1}_{1} u_{n-1}^k-\beta g^{\alpha-1}_{0}u_n^k\\
%h\frac{u_n^{k+1}-u_n^k}{\Delta t}&=-Ch^{1-\alpha}\left[\sum_{i=0}^{n} g^{\alpha-1}_{n-i} u_i^k-g^{\alpha-2}_{n} u_0^k\right]\\
%\end{split}\end{equation*}
%\fbox{end}
Using \eqref{gdiff} this is equivalent to
\[h\frac{u_n^{k+1}-u_n^k}{\Delta t}=-Ch^{1-\alpha}\left[\sum_{i=0}^{n} g^{\alpha-1}_{n-i} u_i^k-g^{\alpha-2}_{n} u_0^k\right] .\]
Letting $\Delta t\to 0$ and $h\to 0$, the left-hand side converges to zero, the first term on the right converges to ${\mathbb D}^{\alpha-1}_{[0,x]}  u(1,t)$ using the Gr\"unwald approximation \eqref{Grunwald0}, and recalling that $hn=1$, the second term
\[Ch^{1-\alpha}g^{\alpha-2}_{n} u_0^k\sim Cn^{\alpha-1}\frac{2-\alpha}{\Gamma(3-\alpha)} n^{1-\alpha}u_0^k\to \frac{C}{\Gamma(2-\alpha)} u(0,t)\]
as $h\to 0$ using \eqref{GrunwaldWeightsAsy}.  Using \eqref{RLtoCaputo} with $\gamma=\alpha-1$ and $x=1$, it follows that the entire right-hand side converges to the Caputo derivative of order $\alpha-1$, and hence the reflecting boundary condition \eqref{PSnofluxBC} holds at the right boundary $x=1$.

Using $b_{0j}=g^\alpha_{j+1}-g^{\alpha-1}_{j+1}$, the iteration equation at the left boundary is $u_0^{k+1}=u_0^k+\beta g^\alpha_{1} u_0^k-\beta g^{\alpha-1}_{1}u_0^k+\beta g^{\alpha-1}_{0} u_{1}^k$. This reduces to
\[h\frac{u_n^{k+1}-u_n^k}{\Delta t}=-Ch^{1-\alpha}\left[\sum_{i=0}^{1} g^{\alpha-1}_{1-i} u_i^k-g^{\alpha-2}_{1} u_0^k\right]\]
which is consistent with the reflecting boundary condition \eqref{PSnofluxBC} at the left boundary $x=0$.  A rigorous proof that the left boundary condition in \eqref{PSnofluxBC} holds is similar to the case of the Riemann-Liouville generator, see \cite{Sankaranarayanan2014}.

\begin{rem}
Comparing \eqref{nofluxBC} and \eqref{PSnofluxBC} shows that the form of the reflecting boundary condition {\it also changes} when we change the type of fractional derivative in the fractional diffusion equation.  When $\alpha=2$, both forms reduce to the classical reflecting boundary condition $\frac{\partial}{\partial x}u(0,t)=\frac{\partial}{\partial x}u(1,t)=0$ .
\end{rem}

\begin{figure}
\begin{center}
\hskip-0.5in
\includegraphics[width=4in]{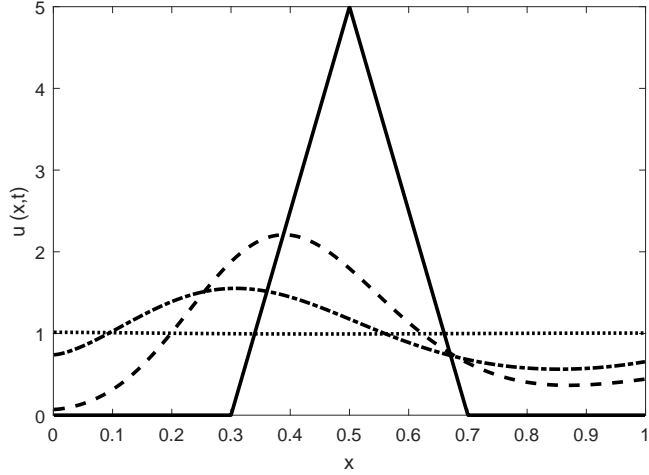}
\caption{Numerical solution of the fractional diffusion equation with Caputo flux \eqref{C-CauchyEq0} with $\alpha =1.5$ and $C=1$ on $0\leq x\leq 1$ with reflecting boundary conditions \eqref{PSnofluxBC} at time $t=0$ (solid line), $t=0.05$ (dashed), $t=0.1$ (dash dot), $t=0.5$ (dotted). }
\label{DcNNfig}
\end{center}
\end{figure}

Figure \ref{DcNNfig} shows a numerical solution of the fractional diffusion equation with Caputo flux \eqref{C-CauchyEq0} on $0\leq x\leq 1$ with reflecting boundary conditions, using the same numerical method and initial function as in Figure \ref{DcDDfig}.  Solution curves are skewed for $0<t<\infty$, and the total mass remains equal to the initial mass $M=1$ for all $t>0$, since the scheme is mass-preserving.  As $t$ increases, the solutions approach the unique steady state solution $u(x)=1$ on $0\leq x\leq 1$ with unit mass, which is very different than the unit mass steady state solution $u(x)=(\alpha-1)x^{\alpha-2}$ to the fractional diffusion equation \eqref{DcDDeq} on $0\leq x\leq 1$ with reflecting boundary conditions.  In the present case, the steady state solution is easy to verify, by simply plugging in to \eqref{C-CauchyEq0}.  In \cite{Sankaranarayanan2014} it is shown that the Cauchy problem \eqref{C-CauchyEq0} with reflecting boundary conditions \eqref{nofluxBC} is well-posed on the Banach space $L^1[0,1]$, and the exact domain of the generator is computed.

%\begin{rem}
%If we replace the reflecting boundary conditions \eqref{PSnofluxBC} with the traditional Neumann conditions
%\begin{equation}
%\label{CaputoNofluxBC}
%\frac{\partial}{\partial x}u(0,t)=\frac{\partial}{\partial x} u(1,t)=0\quad\text{for all $t\geq 0$,}
%\end{equation}
%then we get the same solution as in Figure \ref{DcNNfig} -- or do we -- it looks like the first derivative is NOT zero there.  This is because, if the first derivative is zero, then the $\alpha-1$ order Caputo derivative is zero as well, in light of \eqref{Cdef0}.  The boundary conditions \eqref{CaputoNofluxBC} are more restrictive, since they imply \eqref{PSnofluxBC}.  The form \eqref{PSnofluxBC} is more general, and captures the nonlocal nature of the reflecting boundary condition at the right boundary.
%\end{rem}

Next we consider the fractional diffusion equation with Caputo flux \eqref{C-CauchyEq0} on $0\leq x\leq 1$ with boundary conditions
  \begin{equation}
\label{PScaseRA}
\partial^{\alpha-1}_{[0,x]}  u(0,t)=0\quad\text{and}\quad u(1,t)=0\quad\text{for all $t\geq 0,$}
\end{equation}
reflecting at the left boundary $x=0$ and absorbing at the right boundary $x=1$.  Here we simply zero out the coefficients $b_{ij}$ from \eqref{EulerRRweightsPS} governing mass transport from state $i$ to state $j=n$.  This yields the explicit Euler scheme \eqref{Euler00} with
 \begin{equation}
\label{EulerRAweightsPS}
b_{ij}=\begin{cases}
g^\alpha_{j-i+1} &\text{if $0< j<n$ and $0<i\leq j+1$,}\\
1&\text{if $i=1$ and $j=0$,}\\
-1 &\text{if $i=j=0$,}\\
-g^{\alpha-1}_{j}&\text{if $i=0$ and $0< j< n$,}\\
0&\text{otherwise.}
\end{cases}
\end{equation}

Figure \ref{DcNDfig} shows a numerical solution of the fractional diffusion equation with Caputo flux \eqref{C-CauchyEq0} on $0\leq x\leq 1$ with boundary conditions \eqref{PScaseRA}, using the same numerical method and initial function as in Figure \ref{DcDDfig}.  Solution curves are skewed for all $t>0$, and approach the unique steady state solution $u(x)=0$ on $0\leq x\leq 1$ as $t$ increases.  In \cite{Sankaranarayanan2014} it is shown that the Cauchy problem \eqref{C-CauchyEq0} with these boundary conditions \eqref{PScaseRA} is well-posed on the Banach space $L^1[0,1]$, and the exact domain of the generator is computed.

\begin{figure}
\begin{center}
\hskip-0.5in
\includegraphics[width=4in]{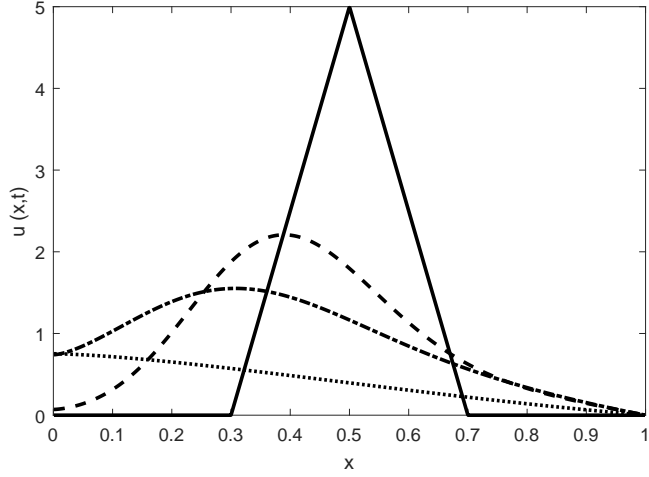}
\caption{Numerical solution of the fractional diffusion equation with Caputo flux \eqref{C-CauchyEq0} with $\alpha =1.5$ and $C=1$ on $0\leq x\leq 1$ with boundary conditions \eqref{PScaseRA}: reflecting on the left, and absorbing on the right at time $t=0$ (solid line), $t=0.05$ (dashed), $t=0.1$ (dash dot), $t=0.5$ (dotted). }
\label{DcNDfig}
\end{center}
\end{figure}

\begin{rem}
The {\it left} reflecting boundary condition for the fractional diffusion equation with Caputo flux \eqref{C-CauchyEq0} can also be written in the traditional form
\begin{equation}
\label{remBCeq}
\frac{\partial}{\partial x}u(0,t)=0\quad\text{for all $t\geq 0$.}
\end{equation}
To see this, rewrite the iteration equation $u_0^{k+1}=u_0^k-\beta u_0^k+\beta u_{1}^k$ in the equivalent form
\[h^{\alpha-1}\frac{u_n^{k+1}-u_n^k}{\Delta t}=C\left[\frac{u_1^k-u_0^k}{h}\right]\]
and let $h\to 0$ and $\Delta t\to 0$.  In view of \eqref{Cdef0}, the condition \eqref{remBCeq} also implies that the Caputo fractional derivative $\partial^{\alpha-1}_{[0,x]}  u(0,t)=0$.  The same is {\it not} true of the Riemann-Liouville derivative, and indeed, even the steady state solution $u(x)=(\alpha-1)x^{\alpha-2}$ of the Riemann-Liouville fractional diffusion equation \eqref{DcDDeq} with reflecting boundary conditions does not satisfy the condition \eqref{remBCeq}.  The nonlocal {\it right} reflecting boundary condition for the fractional diffusion equation with Caputo flux \eqref{C-CauchyEq0} cannot be reduced to a local first derivative condition, since it depends on the values of the solution across the entire domain.  Indeed, one can see in Figure \ref{DcNNfig} that $\frac{\partial}{\partial x}u(1,t)\neq 0$ at the right boundary.
\end{rem}

Finally we consider the fractional diffusion equation with Caputo flux \eqref{C-CauchyEq0} on $0\leq x\leq 1$ with boundary conditions
  \begin{equation}
\label{PScaseAR}
 u(0,t)=0\quad\text{and}\quad \partial^{\alpha-1}_{[0,x]} u(1,t)=0\quad\text{for all $t\geq 0,$}
\end{equation}
absorbing at the left boundary $x=0$ and reflecting at the right boundary $x=1$.  Since $u(0,t)=0$ for all $t>0$, this problem is mathematically equivalent to the fractional diffusion equation \eqref{DcDDeq} on $0\leq x\leq 1$ with boundary conditions \eqref{caseAR}.  Hence Figure \ref{figDN} also represents the solution to this fractional boundary value problem.  One can also see this by setting $b_{i0}=0$ in \eqref{EulerRRweightsPS} to get
\begin{equation}
\label{EulerARweightsPS}
b_{ij}=\begin{cases}
g^\alpha_{j-i+1} &\text{if $0< j<n$ and $0<i\leq j+1$,}\\
-g^{\alpha-1}_{j}&\text{if $i=0$ and $0< j< n$,}\\
-g^{\alpha-2}_{n-1}&\text{if $j=n$ and $i=0$,}\\
-g^{\alpha-1}_{n-i}&\text{if $j=n$ and $0<i\leq n$,}\\
0&\text{otherwise.}
\end{cases}
\end{equation}
Since $u_0^k=0$ for all $k$, the first row of the iteration matrix $B$ is immaterial, and the rest of the matrix is exactly the same as for the corresponding case of the fractional diffusion equation \eqref{DcDDeq}.

\begin{rem}
The general steady state solution to the fractional diffusion equation with Caputo flux \eqref{C-CauchyEq0} is $u(x)=c_1 x^{\alpha-1}+c_2$.  This can be verified by a calculation similar to Remark \ref{SteadyRL}:  Since ${\mathbf D}^\alpha_{[0,x]}  u(x)=\frac{d}{dx}{\mathbb J}^{2-\alpha}_{[0,x]}  u'(x)$, we have
\[{\mathbf D}^\alpha_{[0,x]} u(x)=\frac{d}{dx}{\mathbb J}^{2-\alpha}_{[0,x]} c_1 (\alpha-1)x^{\alpha-2}=\frac{d}{dx}[c_1 (\alpha-1)\Gamma(\alpha-1)]= 0\]
using \eqref{RLintPower}.  Zero boundary conditions require $c_2=0$ to make $u(0)=0$, and then also $c_1=0$ to make $u(1)=0$.  Hence the unique steady state  solution satisfying these boundary conditions is $u(x)=0$.  For reflecting boundary conditions, we compute
\begin{equation*}\begin{split}
\partial^{\alpha-1}_{[0,x]} u(x)&={\mathbb J}^{2-\alpha}_{[0,x]} u'(x)=c_1\Gamma(\alpha)\\
\end{split}\end{equation*}
for $0<x<1$.  The right boundary condition $\partial^{\alpha-1}_{[0,x]} u(1)=0$ requires $c_1=0$, and then the left boundary condition is satisfied for any real number $c_2$.  Take $c_2=1$ to get the solution with total mass 1.  If just the left boundary condition is absorbing, we require $c_2=0$, and then the reflecting boundary condition on the right requires $c_1=0$ as well.  If just the right boundary condition is absorbing, then $c_1 +c_2=0$.  Then if the left boundary is reflecting, $c_1=0$, and hence $c_2=0$ as well.
\end{rem}

\begin{rem}
Cushman and Ginn \cite{CushmanGinn} use the fractional derivative \eqref{C-CauchyEq0} (on the real line, with the lower integration limit $0$ changed to $-\infty$) to model contaminant transport in groundwater.  For such problems, all three fractional derivatives are equivalent, since the boundary term at $x=-\infty$ vanishes.
\end{rem}

\begin{rem}
In \cite{ReflectedStable} we show that the backward generator of a standard spectrally negative $\alpha$-stable process reflected to stay positive is the Caputo fractional derivative \eqref{Cdef0}.  Since \cite[Eq.\ (1.2)]{PatieSimon}
\begin{equation}
\label{PStoCaputo}
\partial^\alpha_{[0,x]}  u(x)  ={\mathbf D}^\alpha_{[0,x]}  u(x) -u'(0)\frac{x^{1-\alpha}}{\Gamma(2-\alpha)}
\end{equation}
for $1<\alpha<2$, and since $u'(0)=0$ for every function in the domain of the generator \cite[Remark 2.3 (a)]{PatieSimon}, these two forms are completely equivalent.
\end{rem}

\begin{rem}
For either the Riemann-Liouville fractional diffusion equation \eqref{RL-CauchyEq} or the fractional diffusion equation with Caputo flux
\eqref{C-CauchyEq0}, the fractional derivative operator with a zero boundary condition at one or both boundaries is invertible \cite{Sankaranarayanan2014}.  This implies that, for any initial data $u_0(x)$, the solution converges to the unique steady state solution $u=0$, see Appendix for details.  In the case of reflecting boundary conditions, the numerical evidence suggests convergence, but we do not have a proof.
\end{rem}
% cite Prop 6 in Bounded domains paper

\section{What can go wrong}
One could also consider the Caputo fractional differential equation
\begin{equation}
\label{CaputoFDE}
\frac{\partial}{\partial t}u(x,t)= C\, \partial^\alpha_{[0,x]}  u(x)
\end{equation}
with $1<\alpha<2$ on the unit interval $0\leq x\leq 1$, using the Caputo fractional derivative \eqref{Cdef0}.   However, solutions to \eqref{CaputoFDE} are not positivity preserving.  An explicit Euler scheme to solve this problem can be developed using the Caputo Gr\"unwald formula
\begin{equation}\begin{split}
\label{GrunwaldCaputo}
\partial^\alpha_{[0,x]}  f(x) =\lim_{h\to 0} h^{-\alpha}\Big[&\sum_{i=0}^{j+1} g^\alpha_i f(x-(i-1)h)- g^{\alpha-1}_{j+1} f(x-jh)\\
&- g^{\alpha-2}_{j+1} f(x-(j-1)h)+g^{\alpha-2}_{j+1} f(x-jh)\Big]
\end{split}\end{equation}
where $j=j(h)=[x/h]+1$.  The proof that \eqref{GrunwaldCaputo} holds is very similar to \eqref{GrunwaldPS}.  This leads to the explicit Euler scheme \eqref{Euler00} with
\begin{equation}
\label{EulerDDweightsCaputo1}
b_{ij}=\begin{cases}
g^\alpha_{j-i+1} &\text{if $0<j<n$ and $1<i\leq j+1$,}\\
-g^{\alpha-1}_{j}+g^{\alpha-2}_{j+1}&\text{if $i=0$ and $0<j<n$,}\\
g^\alpha_j - g^{\alpha-2}_{j+1}&\text{if $i=1$ and $0<j<n$,}\\
0&\text{otherwise.}
\end{cases}
\end{equation}
Figure \ref{Exfig1} shows a numerical solution of the fractional differential equation \eqref{CaputoFDE} on $0\leq x\leq 1$ with Dirichlet boundary conditions \eqref{DcDDbc}, and initial function
\begin{equation}
u_0(x) = \begin{cases}
\displaystyle{\frac{64 \pi^3}{\pi^2 -4} \left(x-\tfrac 14\right)^2 \sin(4 \pi x)} & \text{for $0<x<0.25$,}\\
0 & \text{otherwise,}
\end{cases}
\label{IC2}
\end{equation}
using the same numerical method as in Figure \ref{DcDDfig}.
%Figure \ref{Exfig2} shows a numerical solution with boundary conditions \eqref{PScaseAR}, absorbing on the left and reflecting on the right, and the same initial function \eqref{IC2}.
%Both solutions take negative values, and hence these problems are not positivity-preserving.
%As $t$ increases, both solutions approach the steady state solution $u(x)=0$ on $0\leq x\leq 1$.
Since the solution takes negative values with nonnegative initial data, the Caputo fractional differential equation \eqref{CaputoFDE} cannot provide a physically meaningful model for anomalous diffusion.

\begin{figure}
\begin{center}
\hskip-0.5in
\includegraphics[width=4in]{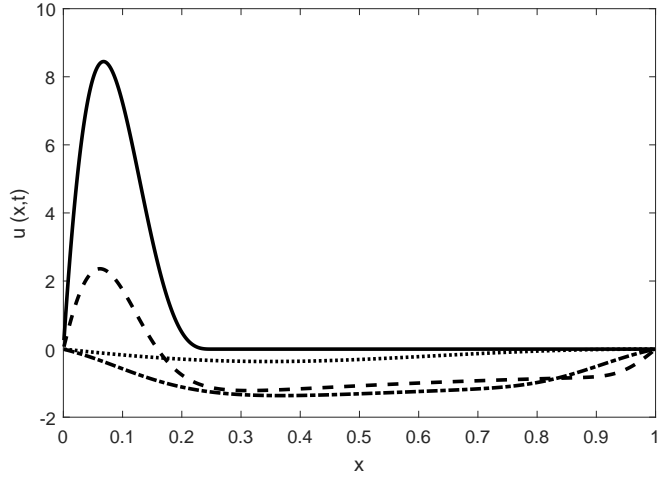}
\caption{Numerical solution of the Caputo fractional differential equation \eqref{CaputoFDE} with $\alpha =1.5$ and $C=1$ on $0\leq x\leq 1$ with zero boundary conditions \eqref{DcDDbc} at time $t=0$ (solid line), $t=0.01$ (dashed), $t=0.04$ (dash dot), $t=0.2$ (dotted).  Solutions takes negative values even though the initial function \eqref{IC2} is nonnegative.}
\label{Exfig1}
\end{center}
\end{figure}

\begin{rem}
The operator \eqref{Cdef0} with absorbing boundary conditions \eqref{DcDDbc}, or absorbing on the left and reflecting on the right \eqref{PScaseAR}, is not dissipative \cite{Sankaranarayanan2014}.  Hence, by the Lumer-Phillips Theorem \cite[Theorem 3.4.5]{ABHN2}, it cannot generate a contraction semigroup.  In our setting, a contraction semigroup means that
\[\int_0^1 |u(x,t)|\,dx\leq \int_0^1 |u_0(x)|\,dx ,\]
i.e., if the solution stays positive, then no mass can be created.  Since any physically meaningful diffusion equation must satisfy this condition, we see another reason why the Caputo fractional differential equation \eqref{CaputoFDE} is not a suitable to model anomalous diffusion.
\end{rem}

\begin{rem}
If one considers the Caputo fractional differential equation \eqref{CaputoFDE} on $0\leq x\leq 1$ with $1<\alpha<2$ and the traditional Neumann boundary conditions
\begin{equation}
\label{CaputoNofluxBC}
\frac{\partial}{\partial x}u(0,t)=\frac{\partial}{\partial x} u(1,t)=0\quad\text{for all $t\geq 0$,}
\end{equation}
then the Caputo and Patie-Simon fractional derivatives are equal, in light of \eqref{PStoCaputo}.  Since \eqref{CaputoNofluxBC} implies \eqref{PSnofluxBC} by \eqref{Cdef0}, solutions to \eqref{CaputoFDE} with the boundary conditions \eqref{CaputoNofluxBC} also solve the problem \eqref{C-CauchyEq0} with reflecting boundary conditions \eqref{PSnofluxBC}.   However, the domain of the fractional derivative \eqref{PSdvt} with the reflecting boundary conditions \eqref{PSnofluxBC} is strictly larger, and there are solutions to \eqref{C-CauchyEq0} with reflecting boundary conditions \eqref{PSnofluxBC} that do not solve \eqref{CaputoFDE} with the boundary conditions \eqref{CaputoNofluxBC}, e.g., note that $\frac{\partial}{\partial x}u(1,t)\neq 0$ in  Figure \ref{DcNNfig}.
%Similarly, solutions to \eqref{CaputoFDE} with the boundary conditions
%  \begin{equation}
%\label{CaputoCaseRA}
%\frac{\partial}{\partial x} u(0,t)=0\quad\text{and}\quad u(1,t)=0\quad\text{for all $t\geq 0,$}
%\end{equation}
%also solve \eqref{C-CauchyEq0} with boundary conditions \eqref{caseRA}.  Similarly, the solution in Figure \ref{DcNDfig} does not satisfy \eqref{CaputoCaseRA}.
\end{rem}

%With a Dirichlet boundary condition $u(0,t)=0$ on the left, the Caputo fractional differential equation \eqref{CaputoFDE} with $1<\alpha<2$ is not positivity preserving, and hence it cannot be used as a physically meaningful model for fractional diffusion.  This is true no matter if the right boundary condition is $u(1,t)=0$, or $\partial^{\alpha-1}_{[0,x]} u(1,t)=0$, or even $\frac{\partial}{\partial x} u(1,t)=0$, since the latter implies $\partial^{\alpha-1}_{[0,x]} u(1,t)=0$.

\section*{Appendix}
The following result implies that solutions in this paper converge to the steady state solution $u=0$ if at least one boundary condition is absorbing. This follows because, in this case, the fractional derivative operator is invertible and generates a strongly continuous positive contraction semigroup \cite{Sankaranarayanan2014}.

\begin{lemma}
Let $(\Omega,\mu)$ be a $\sigma$-finite measure space and $X=L_p(\Omega)$, $1\le p <\infty$, or let $\Omega$ be a locally compact Hausdorff space and $X=C_0(\Omega)$. Suppose that $A$ generates a strongly continuous positive contraction semigroup on $X$ and that $A^{-1}$ exist as a bounded operator on $X$. Then, for all $x\in X$, we have $\|T(t)x\|\to 0$ as $t\to \infty$ exponentially fast.
 \end{lemma}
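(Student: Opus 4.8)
The plan is to reduce the pointwise decay statement to \emph{uniform exponential stability} of the semigroup, i.e.\ to the assertion that the growth bound $\omega_0(A)=\lim_{t\to\infty}t^{-1}\log\|T(t)\|$ is strictly negative. Once $\omega_0(A)<0$ is known, I can pick $\omega\in(\omega_0(A),0)$ and $M\ge 1$ with $\|T(t)\|\le M e^{\omega t}$, so that $\|T(t)x\|\le Me^{\omega t}\|x\|\to 0$ at the exponential rate $|\omega|$ for every $x\in X$. Thus everything comes down to controlling $\omega_0(A)$, and the work splits into locating the spectral bound and then transferring that information to the growth bound.

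First I would pin down the spectral bound $s(A)=\sup\{\operatorname{Re}\lambda:\lambda\in\sigma(A)\}$. Because $T$ is a contraction semigroup, $\|T(t)\|\le 1$ forces $\omega_0(A)\le 0$, and hence $s(A)\le\omega_0(A)\le 0$. The hypothesis that $A^{-1}$ is a bounded operator means precisely that $0\in\rho(A)$, so $0\notin\sigma(A)$. To upgrade $s(A)\le 0$ to $s(A)<0$ I would invoke the basic spectral property of positive semigroups: on a Banach lattice the resolvent $R(\lambda,A)=\int_0^\infty e^{-\lambda t}T(t)\,dt$ is a positive operator for real $\lambda>s(A)$, and a Pringsheim-type argument (the real point on the boundary of the resolvent set is a singularity of a Laplace transform with positive ``coefficients'') yields $s(A)\in\sigma(A)$ whenever $\sigma(A)\neq\emptyset$. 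Since $0\notin\sigma(A)$ while $s(A)\le 0$, this forces $s(A)<0$; the degenerate case $\sigma(A)=\emptyset$ gives $s(A)=-\infty$ and is even easier.

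The decisive step is the passage from $s(A)<0$ to $\omega_0(A)<0$. On a general Banach space, and even on a general Banach lattice, this implication is \emph{false}, so here I would exploit the specific structure of the two admissible families of spaces. For $X=L_p(\Omega)$ with $1\le p<\infty$ the equality $s(A)=\omega_0(A)$ for positive $C_0$-semigroups is exactly Weis's stability theorem; for $X=C_0(\Omega)$ the analogous equality is available from the corresponding stability theory for positive semigroups on $C_0$-spaces. Either way $\omega_0(A)=s(A)<0$, and the conclusion follows from the first paragraph. As supporting motivation one notes that positivity also gives the pointwise estimate $|R(\lambda,A)f|\le R(\operatorname{Re}\lambda,A)|f|$, whence $\|R(\lambda,A)\|\le\|R(\operatorname{Re}\lambda,A)\|$; combined with the contraction bound $\|R(\lambda,A)\|\le 1/\operatorname{Re}\lambda$ for $\operatorname{Re}\lambda>0$ and the continuity of the resolvent at $0\in\rho(A)$, this makes $R(\cdot,A)$ uniformly bounded on the closed right half-plane, which is the analytic ingredient the $L_p$ and $C_0$ stability theorems convert into a genuine uniform decay rate.

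I expect the real obstacle to be precisely this last bridge, $s(A)<0\Rightarrow\omega_0(A)<0$. The gap between the spectral bound and the growth bound is the heart of asymptotic semigroup theory and cannot be closed by soft arguments valid on arbitrary spaces; one must use that $L_p$ with $1\le p<\infty$ has order-continuous norm and that $C_0(\Omega)$ is an $AM$-space, which is exactly what the cited theorems encode. By contrast, the remaining ingredients — identifying $0\in\rho(A)$ with boundedness of $A^{-1}$, positivity of the resolvent, and $s(A)\in\sigma(A)$ — are standard and should require only routine verification.
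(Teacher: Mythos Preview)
Your proposal is correct and follows essentially the same route as the paper: establish $s(A)<0$ from positivity together with $0\in\rho(A)$, then invoke the $L_p$- and $C_0$-specific equality $s(A)=\omega_0(A)$ (the paper cites Theorems~5.3.6 and~5.3.8 of Arendt--Batty--Hieber--Neubrander) to conclude uniform exponential decay. The only cosmetic variation is in the first step, where the paper argues via positivity of $\lim_{\lambda\to 0^+}R(\lambda,A)$ and then cites Engel--Nagel, Chapter~VI, Lemma~1.9, rather than your direct Pringsheim-type argument that $s(A)\in\sigma(A)$; these are two standard phrasings of the same underlying fact.
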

 \begin{proof}
 Let $\sigma(A)$ denote the spectrum of $A$ and $\rho(A)$ the resolvent set of $A$. Since $A$ generates a strongly continuous contraction semigroup, it follows from the Hille-Yosida Theorem that $(0,\infty)\subset \rho(A)$. As $A^{-1}$ exist as a bounded operator on $X$, it follows that $0\in \rho(A)$. Since $A$ generates a strongly continuous positive contraction semigroup on $X$, it follows that the resolvent $R(\lambda, A)$ of $A$ satisfies $R(\lambda, A)=\int_0^{\infty}e^{-\lambda t}T(t)\,\, dt\ge 0$ (strong Bochner integral) for $\lambda>0$ as the positive cone is closed in $X$. The resolvent is an analytic function of $\lambda$ for $\lambda \in \rho(A)$ and hence continuous. Therefore $A^{-1}=\lim_{\lambda \to 0+}R(\lambda, A)\ge 0$, again, since the positive cone is closed in $X$.

 Let $s(A)$ denote the spectral bound of $A$; that is, $$s(A)=\sup\{\text{Re}\,\lambda :\,\lambda \in \sigma(A)\}$$ and $\omega_0\in \mathbb{R}$ the growth bound of the semigroup; that is, $$\omega_0=\inf\{\omega\in \mathbb{R}:\text{ there is }M_{\omega}\ge 1\text{ such that }\|T(t)\|\leq M_{\omega}e^{\omega t} ,\quad t\ge 0\}.$$ It follows from \cite[Chapter VI, Lemma 1.9]{En-Na} that $A^{-1}\ge 0$ implies that $s(A)<0$.  Finally, by \cite[Theorem 5.3.6]{ABHN2} when $X=L_p(\Omega)$ and \cite[Theorem 5.3.8]{ABHN2} when $X=C_0(\Omega)$, it follows that $\omega_0=s(A)$ and hence $\omega_0<0$. Thus, there is $\epsilon>0$ and $M_{\epsilon}\ge 1$  such that $\|T(t)\|\leq M_{\epsilon}e^{-\epsilon t}$, $t\ge 0$, and the proof is complete.

%
% Suppose first that $s(A)>-\infty$. Then, the semigroup version of Perron's Theorem,  \cite[Theorem 5.3.1]{ABHN2} (see, also, \cite[Chapter VI, Theorem 1.10]{En-Na}) asserts that $s(A)\in \sigma(A)$. As  $(-\delta, \infty)\subset \rho(A)$ we must have $s(A)\leq -\delta$. Finally, by \cite[Theorem 5.3.6]{ABHN2} when $X=L_p(\Omega)$ and \cite[Theorem 5.3.8]{ABHN2} when $X=C_0(\Omega)$, it follows that $\omega_0=s(A)$ and hence $\omega_0\leq -\delta$. Thus, for all $0<\epsilon<\delta$ there is $M_{\epsilon}\ge 1$  such that $\|T(t)\|\leq M_{\epsilon}e^{-\epsilon t}$, $t\ge 0$.
%
%If $s(A)=-\infty$; that is, when $\sigma(A)=\emptyset$, then $\sigma(A)=\sigma(A^*)=\emptyset$, where $A^*$ denotes the adjoint of $A$. Then, the statement follows from Theorem \cite[Chapter V, Theorem 2.21]{En-Na} (see also, \cite{AB,LV}).
\end{proof}
%\begin{rem}
%If, in addition, $A^{-1}\ge 0$, then $\|T(t)\|\leq Me^{-\epsilon t}$, $t\ge 0$, for some $M\ge 1$ and $\epsilon>0$. Indeed, it follows from \cite[Chapter VI, Lemma 1.9]{En-Na} that in this case we have $s(A)<0$.  Hence,  as in the above proof, $s(A)=\omega_0<0$.  Then solutions converge exponentially fast.
%\end{rem}

\end{document}